\tikzset{cross/.style={preaction={-,draw=white,line width=6pt}}}
\numberwithin{equation}{section}
\theoremstyle{definition}
\newtheorem{definition}{Definition}[section]
\newtheorem{proposition}[definition]{Proposition}
\newtheorem{remark}[definition]{Remark}
\newtheorem{note}[definition]{Note}
\newtheorem{example}[definition]{Example}
\newtheorem{fact}[definition]{Fact}
\newtheorem*{proof*}{Proof}
\newcounter{maintheoremcounter}
\newtheorem{maintheorem}[maintheoremcounter]{Theorem}
\newcommand{\pd}[2]{\frac{\partial #1}{\partial #2}}
\newcommand{\Rn}[1]{\mathbb{R}^{#1}}
\newcommand{\Cn}[1]{\mathbb{C}^{#1}}
\newcommand{\id}{\mathrm{id}}
\newcommand{\iu}{\sqrt{-1}}
\newcommand{\Kahlerian }{K\"{a}hlerian}
\newcommand{\HesBor}{1}
\newcommand{\ThmHesBor}{
    \setcounter{maintheoremcounter}{\HesBor}
    \addtocounter{maintheoremcounter}{-1}
    \begin{maintheorem}
        (1)~For~$(M,\nabla)$,~let~$(I,J,K)$ be the almost para-quaternionic
        structure naturally induced on the tangent bundle~$TM$. 
        Then the following conditions are equivalent.\\
        \quad (1-a)~$\nabla$ is flat.\\
        \quad (1-b)~$I,J$ and~$K$ are individually integrable. \\
        \quad (1-c)~$(I,J,K)$ is integrable.\\
        % \quad (1-c)~$(I,J,K)$ is strongly integrable.\\
        (2)~For~$(M,\nabla,g)$,~let~$(I,J,K,h,k,\omega)$ be the almost Born
        structure naturally induced on the tangent bundle~$TM$. 
        Then the following conditions are equivalent.\\
        \quad (2-a)~$(\nabla, g)$ is a Hessian structure.\\
        \quad (2-b)~$(I,J,K,h,k,\omega)$ is integrable.\\
        \quad (2-c)~$(I,J,K,h,k,\omega)$ is strongly integrable.
    \end{maintheorem}
}
\title{A Born Structure on the Tangent Bundle of a Hessian Manifold}
\author{Hakobi Sakamoto}
\email{skmtha@ms.u-tokyo.ac.jp}
\date{}
\begin{document}
\begin{abstract}
    The Hessian structure, introduced in \cite{Shi76},
    is a geometric structure consisting of a pair~$(\nabla,g)$
    of an affine connection~$\nabla$
    and a Riemannian metric~$g$ satisfying certain conditions.
    On the other hand, the Born structure, introduced in \cite{FLM14}, is
    a strictly stronger geometric structure  
    than an almost (para-)Hermitian structure.
    In \cite{MS19}, it is proved that 
    for a given manifold endowed with a pair~$(\nabla, g)$, one can introduce
    an almost Born structure on the tangent bundle.

    In this article, we study the equivalence between
    the conditions that the pair~$(\nabla, g)$ defines a Hessian structure,
    and that the induced almost Born structure is integrable.
\end{abstract}
\maketitle
\thispagestyle{fancy}

\tableofcontents
\clearpage

%本文

\section{Introduction}
A \textit{Hessian manifold} is a manifold equipped with a flat affine 
connection~$\nabla$ and a Riemannian metric~$g$ such that 
the~$(0,3)$ tensor field~$\nabla g$ is symmetric in all three
variables. It was defined in \cite{Shi76} and 
is considered to be the real analogue of \Kahlerian~manifolds. 
Hessian manifolds are also known as \textit{dually flat manifolds} in 
\textit{information geometry}. They are studied as ``natural'' geometric structures
introduced on a class of statistical models known as \textit{exponential families}
(such as the normal, beta and categorical distributions).

The \textit{Born structure} is a geometric structure introduced in
\cite{FLM14} as part of the geometrical foundation of \textit{double field theory},
a theoretical framework in physics motivated by string theory.
An \textit{almost Born structure}~$(I,J,K,h,k,\omega)$ on a manifold
consists of $(1,1)$-tensor fields $I,J$ and~$K$ satisfying 
$-I^2=J^2=K^2=\id$, a Riemannian metric~$h$, 
a pseudo-Riemannian metric~$k$ of signature~$(n,n)$, 
and a non-degenerate 2-form~$\omega$, with certain compatibility conditions.
It includes other geometric structures; for example,
$(I,h,\omega)$ defines an almost Hermitian structure.
Thus, a Born structure is a strictly stronger 
than each of these individually.

In \cite{Dom62},\cite{CFG96} or \cite{MS19},
it is proved that for a given manifold~$M$, an affine connection~$\nabla$ on~$M$,
and a Riemannian metric~$g$ on~$M$, one can introduce almost (para-)complex 
structures or metrics on the total space of 
the tangent bundle~$TM$. This yields an almost (para-)Hermitian structure 
or an almost Born structure on~$TM$.
It is also proved in \cite{Sat07} that the introduced almost 
Hermitian structure~$(I,h,\omega)$ on~$TM$ is \Kahlerian~if
and only if the pair~$(\nabla,g)$ defines a Hessian structure on~$M$.

In this paper, we study the same equivalence for the 
introduced almost Born structure on~$TM$.
Specifically, we define two types of integrability for
an almost Born structure: \textit{integrability} and 
\textit{strong integrability}.
We then show that the introduced almost Born structure
satisfies either one of these integrability conditions if and only if
the pair~$(\nabla,g)$ defines a Hessian structure on~$M$.
The main result is as follows:

\ThmHesBor

We note that for general almost Born structures, beyond those introduced
on tangent bundles, the strong integrability is a strictly stronger condition
than the integrability.

\section{Preliminaries}

\subsection{Hessian structures}
In this section, we review some fundamental results about Hessian manifolds.\\
In the following, let $M$ be a manifold, $\nabla$ an affine connection on~$M$, 
and~$g$ a Riemannian metric on~$M$.
\begin{definition}[\cite{Shi76}]
    A pair~$(\nabla,g)$ is said to be a \textit{Hessian structure} on~$M$ if
    $\nabla$ is flat i.e., its curvature and torsion tensors vanish, 
    and (0,3)-tensor field
    $\nabla g$ is symmetric in the three variables.
\end{definition}
\begin{definition}[\cite{Ama85}]
    There exists a unique affine connection~$\nabla^*$ for a given~$\nabla$ 
    and~$g$ satisfying the condition
    \begin{equation*}
        Xg(Y,Z)=g(\nabla_XY,Z)+g(Y,\nabla^*_XZ)
    \end{equation*}
    for any vector fields~$X,Y$ and~$Z$, which is called the \textit{dual connection}
    of $\nabla$ with respect to $g$.
\end{definition}
\begin{fact}[\cite{Fuj21}]\label{not:HesTor}~\\
    (1)~The conditions~$R=0$ and~$R^*=0$
    are equivalent for the curvature tensors~$R$ and~$R^*$ of~$\nabla$ 
    and~$\nabla^*$, respectively.\\
    (2)~If any two of the following four conditions hold, then so do
    the remaining two.\\
    \quad (2-a)~$\nabla$ is torsion-free.\\
    \quad (2-b)~$\nabla^*$ is torsion-free.\\
    \quad (2-c)~$\nabla g$ is symmetric in the three variables.\\
    \quad (2-d)~$\frac{1}{2}(\nabla+\nabla^*)$ coincides with the Levi-Civita
    connection of~$g$.
\end{fact}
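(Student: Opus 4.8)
The plan is to reduce everything to two elementary tensorial objects built from $\nabla$ and $\nabla^*$: the difference tensor $D(X,Y):=\nabla^*_XY-\nabla_XY$, which is a genuine $(1,2)$-tensor because it is a difference of connections, and the averaged connection $\hat\nabla:=\tfrac12(\nabla+\nabla^*)$. Substituting the defining relation $Xg(Y,Z)=g(\nabla_XY,Z)+g(Y,\nabla^*_XZ)$ into the definition $(\nabla_Xg)(Y,Z)=Xg(Y,Z)-g(\nabla_XY,Z)-g(Y,\nabla_XZ)$ gives at once the clean formula $(\nabla_Xg)(Y,Z)=g(Y,D(X,Z))$, while adding the defining relation to its $Y\leftrightarrow Z$ transpose shows that $\hat\nabla$ is metric, i.e. $Xg(Y,Z)=g(\hat\nabla_XY,Z)+g(Y,\hat\nabla_XZ)$. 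These two identities are the only computational inputs I will need.

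For (1) I would establish the curvature duality identity
\begin{equation*}
 g(R(X,Y)Z,W)=-\,g(Z,R^*(X,Y)W).
\end{equation*}
The derivation is by differentiating the defining relation twice: expand $g(\nabla_X\nabla_YZ,W)$ by pushing each $\nabla$ past $g$ so that it becomes a $\nabla^*$ acting on $W$ together with derivatives of $g$, do the same for $g(\nabla_Y\nabla_XZ,W)$ and for $g(\nabla_{[X,Y]}Z,W)$, and collect. The second-derivative terms in $g$, the cross terms, and the bracket terms all cancel, leaving exactly $-g(Z,R^*(X,Y)W)$. Granting the identity, (1) is immediate: if $R=0$ then $g(Z,R^*(X,Y)W)=0$ for all $Z$, so non-degeneracy of $g$ forces $R^*=0$; reading the same identity with the roles of $W$ and $Z$ exchanged gives the converse.

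For (2) I would translate each of the four conditions into a single linear relation between the torsions $T$ of $\nabla$ and $T^*$ of $\nabla^*$. Conditions (2-a) and (2-b) are by definition $T=0$ and $T^*=0$. For (2-c): since $g$ is symmetric, $\nabla g$ is automatically symmetric in its last two slots, so $(\nabla_Xg)(Y,Z)=g(D(X,Y),Z)$ as well, i.e. each endomorphism $D(X,\cdot)$ is $g$-self-adjoint; hence symmetry of $\nabla g$ under $X\leftrightarrow Y$ becomes $g(D(X,Y),Z)=g(D(Y,X),Z)$ for all $Z$, that is $D(X,Y)=D(Y,X)$. Because $D(X,Y)-D(Y,X)=T^*(X,Y)-T(X,Y)$, this says exactly $T=T^*$. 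For (2-d): $\hat\nabla$ is metric by the identity of the first paragraph, and the Levi--Civita connection is the unique torsion-free metric connection, so $\hat\nabla=\nabla^{g}$ is equivalent to $\hat\nabla$ being torsion-free; since the torsion of $\hat\nabla$ equals $\tfrac12(T+T^*)$, condition (2-d) is equivalent to $T+T^*=0$.

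It then only remains to observe the linear-algebra statement: viewing the pair $(T,T^*)$ as an unknown, the four conditions are the four linear equations $T=0$, $T^*=0$, $T-T^*=0$, $T+T^*=0$, and any two of these four forms are linearly independent on the space of such pairs. Hence imposing any two of them forces $T=T^*=0$, and $T=T^*=0$ satisfies all four simultaneously; this is precisely the claim that any two of (2-a)--(2-d) imply the remaining two. The main obstacle is the bookkeeping in the curvature identity of (1): each $g(\nabla\nabla Z,W)$ expands into several terms after two applications of the defining relation, and one must verify carefully that everything except the $\nabla^*\nabla^*$ combination assembling into $R^*$ cancels. By contrast, all of (2) is formal once the two identities of the first paragraph are in hand.
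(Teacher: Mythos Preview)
Your argument is correct. The paper itself does not prove this fact; it is stated with a citation to \cite{Fuj21} and used as a black box, so there is no in-paper proof to compare against. Your approach is the standard one: the curvature identity $g(R(X,Y)Z,W)=-g(Z,R^*(X,Y)W)$ obtained by twice pushing $\nabla$ across $g$ into $\nabla^*$ settles (1), and the reduction of (2-a)--(2-d) to the four linear conditions $T=0$, $T^*=0$, $T-T^*=0$, $T+T^*=0$ on the pair $(T,T^*)$, any two of which are independent, settles (2). The only place to be careful is your derivation of $(\nabla_Xg)(Y,Z)=g(D(X,Y),Z)$: you get $(\nabla_Xg)(Y,Z)=g(Y,D(X,Z))$ directly, and then use symmetry of $g$ in $Y,Z$ to rewrite this as $g(D(X,Y),Z)$---that step is fine but worth making explicit, since it is what identifies (2-c) with $D(X,Y)=D(Y,X)$ rather than with some other symmetry of $D$.
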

% \begin{remark}
%     (1) follows from the identity
%     \begin{equation*}
%         g(R(X,Y)Z,W)=-g(Z,R^*(X,Y)W)
%     \end{equation*}
%     which holds for any vector fields~$X,Y,Z$ and~$W$. \\
%     (2) follows from the facts that $\nabla g$ is symmetric in the 
%     second and third variables by definition, and that
%     the torsion tensors~$T$ and~$T^*$ of~$\nabla$ and~$\nabla^*$, respectively,
%     satisfies the identity
%     \begin{equation*}
%         (\nabla g)(X,Y,Z)-(\nabla g)(Y,X,Z)=g(T(X,Y),Z)-g(T^*(X,Y),Z)
%     \end{equation*}
%     for any vector fields~$X,Y$ and~$Z$.\\
%     We note that the two identities are also stated in \cite{Sat07}.
% \end{remark}

\begin{example}\label{exm:StaHes}
    % Let~$\nabla$ be the standard affine connection on~$\Rn{n}$, 
    % and~$g$ the Euclidean metric on~$\Rn{n}$. Then~$(\nabla, g)$ 
    % is a Hessian structure on~$\Rn{n}.$
    The Euclidean space~$\Rn{n}$ admits the natural Hessian structure
    $(\nabla, g)$, where~$\nabla$ denotes the flat affine connection
    induced by the linear structure, and~$g$ is the Euclidean metric.
\end{example}

\subsection{Complex and para-complex structures}
In this section, we review the definitions of almost complex structures and
almost para-complex structures.\\
In the following, let~$N$ be a $2n$-dimensional manifold, 
and~$TN,T^*N$~the tangent and cotangent bundle of~$N$. 
In this paper, we naturally identify the endomorphism bundle~$\mathrm{End}(TN)$
and the (1,1)-tensor bundle~$TN\otimes T^*N.$ 
% We also denote by $\Gamma(E)$ the space of smooth sections of a vector bundle
% $E$ over~$N$.

\begin{definition}[\cite{CFG96}]~\\
    (1)~A (1,1)-tensor field~$I$ on~$N$
    is called an \textit{almost complex structure} if it satisfies~$I^2=-\id.$\\
    (2)~A (1,1)-tensor field~$K$ on~$N$
    is called an \textit{almost para-complex structure} if it satisfies~$K^2=\id$,
    and the dimensions of the ($\pm 1$)-eigenspaces at each point of~$N$ 
    are both equal to $n$.
\end{definition}
% \begin{definition}
%     Almost para-complex structure~$K$の$\pm 1$-eigenbundleを
%     $L^K,\tilde{L}^K$とする. すなわち$p\in N$に対し,
%     $L^K_p:=\{v\in T_pN~;~K_p(v)=v\},~
%     \tilde{L}^K_p:=\{v\in T_pN~;~K_p(v)=-v\}$と定める.
% \end{definition}

\subsection{Para-quaternionic and Born structures}
In this section, we review the definitions of almost para-quaternionic structures
and almost Born structures.\\
In the following, let~$N$ be a $2n$-dimensional manifold.
\begin{definition}[\cite{IZ05}]
    Let $I,J$ and~$K$ be (1,1)-tensor fields
    on~$N$.
    The triple~$(I,J,K)$ is called an 
    \textit{almost para-quaternionic structure} on~$N$ 
    if it satisfies~$-I^2=J^2=K^2=\id$ and~$IJK=-\id.$
\end{definition}
\begin{remark}
    An almost para-quaternionic structure is also called 
    a quaternion structure of the second kind or a complex product structure
    (\cite{YA73}~and~\cite{And05}).
\end{remark}
\begin{note}
    We note that if~$(I,J,K)$ is an almost para-quaternionic structure, 
    then by definition $I$ is
    an almost complex structure and $J,K$ are almost para-complex structures.
    In addition, $I,J$ and~$K$ satisfy the following anti-commutation relations.
    \begin{equation*}
        I=JK=-KJ,\quad  J=-KI=IK,\quad  K=-IJ=JI.
    \end{equation*}
\end{note}

\begin{definition}[\cite{FRS17} and \cite{FRS19}]
    Let~$(I,J,K)$ be an almost para-quaternionic structure,~$h$ a Riemannian metric,
    $k$ a pseudo-Riemannian metric of signature~$(n,n)$,
    and $\omega$ a non-degenerate 2-form. 
    The tuple~$(I,J,K,h,k,\omega)$ is called an \textit{almost Born structure} on~$N$
    if it satisfies the following identities:
    \begin{equation*}
        I=h^{-1}\circ \omega,\quad  J=k^{-1}\circ h,\quad  K=\omega^{-1}\circ k.
    \end{equation*}
    We note that the tensor fields~$h, k, \omega$ define
    the natural isomorphisms~$TN\rightarrow T^*N$ via~$X\mapsto h(X,\cdot)$,
    and similarly for~$k$ and~$\omega$.
\end{definition}
\begin{remark}
    If $(I,J,K,h,k,\omega)$ is an almost Born structure, then the
    following diagram commutes:
    \center
    \begin{tikzpicture}[auto]
        \node (a) at (0, 0) {$T^*N$};
        \node (x) at (0, 2) {$TN$};
        \node (y) at (1.73,-1) {$TN$};
        \node (z) at (-1.73,-1) {$TN$};
        \draw[->] (x) to node {$\omega$} (a);
        \draw[->] (y) to node {$h$} (a);
        \draw[->] (z) to node {$k$} (a);
        \draw[->] (x) to node {$I$} (y);
        \draw[->] (y) to node {$J$} (z);
        \draw[->] (z) to node {$K$} (x);
    \end{tikzpicture}
    \flushleft
\end{remark}

\subsection{Integrability of para-quaternionic and Born structures}
In this section, we define integrability conditions on 
almost para-quaternionic structures and almost Born structures.\\
In the following, let~$N$ be a~$2n$-dimensional manifold.
\begin{definition}[\cite{And05}~and~\cite{HKP25}]
    For a (1,1)-tensor field~$A$ on~$N$, 
    we define the \textit{Nijenhuis tensor} $N_A$ 
    as
    \begin{equation*}
        N_A(X,Y):=A^2[X,Y]-A([AX,Y]+[X,AY])+[AX,AY]
    \end{equation*}
    for any vector fields~$X$ and~$Y$.
\end{definition}
\begin{remark}
    The above definition differs from those in 
    \cite{YA73},\cite{FRS17} and \cite{FRS19}
    up to scalar multiplication.
\end{remark}
\begin{fact}[Newlander-Nirenberg theorem]\label{fac:alcom}
    For an almost complex structure~$I$ on~$N$, 
    the following two conditions are equivalent.\\
    (1)~The Nijenhuis tensor~$N_I$ vanishes.\\
    (2)~Locally, there exists a chart in which
    $I=\begin{pmatrix}
        0& -1_n\\ 1_n&0
    \end{pmatrix}$.
\end{fact}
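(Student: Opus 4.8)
The direction $(2)\Rightarrow(1)$ is a direct computation: in a chart where $I$ has the constant matrix form displayed in the statement, the coordinate vector fields $\partial_1,\dots,\partial_{2n}$ satisfy $[\partial_a,\partial_b]=0$, and each $I\partial_a$ is again a constant-coefficient combination of the $\partial_b$; substituting into the defining formula for $N_I$ makes every bracket vanish, so $N_I=0$. The substance of the statement is the converse $(1)\Rightarrow(2)$, the genuinely deep part of the Newlander--Nirenberg theorem. My plan is to first translate the vanishing of $N_I$ into a Frobenius-type involutivity condition, and then reduce the construction of the chart to the local solvability of an overdetermined Cauchy--Riemann system.

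First I would complexify the tangent bundle, $TN\otimes\mathbb{C}=T^{1,0}\oplus T^{0,1}$, where $T^{1,0}$ and $T^{0,1}$ are the $(\pm\iu)$-eigenbundles of the complex-linear extension of $I$. Extending $N_I$ complex-bilinearly and evaluating on $Z,W\in T^{0,1}$, so that $IZ=-\iu Z$ and $IW=-\iu W$, a short computation using $I^2=-\id$ collapses the four terms to
\begin{equation*}
    N_I(Z,W)=-4\,\mathrm{pr}_{T^{1,0}}[Z,W],
\end{equation*}
where $\mathrm{pr}_{T^{1,0}}$ denotes the projection onto $T^{1,0}$. Hence $N_I=0$ is equivalent to the involutivity of $T^{0,1}$, that is $[T^{0,1},T^{0,1}]\subseteq T^{0,1}$; the conjugate statement for $T^{1,0}$ follows by complex conjugation.

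Next I would recast the target $(2)$ as the existence, near each point, of complex-valued functions $w^1,\dots,w^n$ with $\mathbb{C}$-linearly independent differentials that are $I$-holomorphic, meaning $\bar Z\,w^j=0$ for every $\bar Z\in T^{0,1}$. Given such functions, the $dw^j$ span the annihilator of $T^{0,1}$, the $w^j$ serve as local holomorphic coordinates, and in the induced real chart $(\mathrm{Re}\,w^j,\mathrm{Im}\,w^j)$ the structure $I$ takes exactly the required constant form. Thus the whole problem reduces to solving the overdetermined first-order system $\bar L_\alpha w=0$, where $\bar L_1,\dots,\bar L_n$ is a local frame of $T^{0,1}$, and the compatibility condition making this system solvable is precisely the involutivity $[\bar L_\alpha,\bar L_\beta]\in T^{0,1}$ established above: the Nijenhuis tensor is the exact obstruction.

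The hard part will be solving this system in the smooth category. In the real-analytic case one may complexify the coordinates and apply the holomorphic Frobenius theorem to the involutive distribution $T^{0,1}$, reading off the integrals $w^j$ directly. In the $C^\infty$ case the Frobenius theorem does not apply, since $T^{0,1}$ together with its conjugate spans all of $TN\otimes\mathbb{C}$ and the distribution fails the transversality hypothesis; instead I would normalize $I$ to agree with the standard structure to first order at the base point, rewrite the equation as a perturbed $\bar\partial$-equation $\bar\partial u=Q(u)$ with quadratically small nonlinearity, and solve it by a Newton-type iteration. The convergence of this scheme rests on elliptic a priori estimates for the $\bar\partial$-complex (Hörmander's $L^2$-estimates, or Kohn's estimates for the $\bar\partial$-Neumann problem) in suitable Sobolev or Hölder norms, together with the crucial fact that $N_I=0$ forces the error terms to close up at each step. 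I expect this solvability of the overdetermined Cauchy--Riemann system to be the genuine obstacle: the differential-geometric reductions above are routine, whereas the nonlinear PDE is the Newlander--Nirenberg theorem proper, which is why the statement is invoked here as a cited \emph{Fact}.
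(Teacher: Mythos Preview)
The paper does not prove this statement at all: it is recorded as a \emph{Fact}, namely the classical Newlander--Nirenberg theorem, and is simply invoked where needed (in Note~\ref{not:PQint}) without any argument. So there is no ``paper's own proof'' to compare your proposal against.

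Your outline is a correct and standard sketch of how the theorem is actually proved: the reformulation of $N_I=0$ as involutivity of $T^{0,1}$, the reduction to finding local $I$-holomorphic functions, the real-analytic case via holomorphic Frobenius, and the $C^\infty$ case via elliptic estimates for a perturbed $\bar\partial$-equation. You even note explicitly that the statement is cited here as a Fact, so you are aware of the situation. In short, your write-up goes well beyond what the paper attempts; the paper treats this as a black box, and your proposal is an accurate summary of what is inside that box.
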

\begin{definition}[\cite{Sil08}]
    An almost complex structure is said to be \textit{integrable} if 
    the two equivalent conditions described above are satisfied.
\end{definition}
\begin{fact}[\cite{CMMS04}]\label{fac:alpcom}
    For an almost para-complex structure~$K$ on~$N$,
    the following three conditions are equivalent.\\
    (1)~The Nijenhuis tensor~$N_K$ vanishes.\\
    (2)~The tangent distributions, given by
    the~$\pm 1$-eigenspaces of~$K$ at each point, are 
    both integrable.\\
    (3)~Locally, there exists a chart in which
    $K=\begin{pmatrix}
        1_n&0\\0&-1_n
    \end{pmatrix}$.
\end{fact}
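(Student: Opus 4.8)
The plan is to treat this as the para-complex analogue of the Newlander--Nirenberg theorem, exploiting the crucial simplification that, unlike the genuinely complex case, no hard analytic input is needed: everything reduces to the (real, smooth) Frobenius theorem. Since $K^2=\id$ with $\pm1$-eigenspaces of constant rank $n$, the endomorphisms $P_{\pm}=\frac{1}{2}(\id\pm K)$ are smooth projections of constant rank, so the $\pm1$-eigenspaces assemble into smooth rank-$n$ distributions $\mathcal{D}_{\pm}$ with $TN=\mathcal{D}_+\oplus\mathcal{D}_-$. I would first establish $(1)\Leftrightarrow(2)$ by a pointwise tensorial computation, and then $(2)\Leftrightarrow(3)$ by simultaneously rectifying the two transverse foliations.

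For $(1)\Leftrightarrow(2)$ I would recall that $N_K$ is $C^\infty$-bilinear, so its vanishing can be tested on a local frame adapted to the splitting $TN=\mathcal{D}_+\oplus\mathcal{D}_-$. Expanding the definition of $N_K$ on eigenvector fields and using $KX=\pm X$ there, one finds
\begin{equation*}
N_K(X,Y)=2(\id\mp K)[X,Y]=4P_{\mp}[X,Y]\qquad(X,Y\in\mathcal{D}_{\pm}),
\end{equation*}
together with $N_K(X,Y)=0$ whenever $X\in\mathcal{D}_+$ and $Y\in\mathcal{D}_-$ (the mixed terms cancel). Hence $N_K$ vanishes if and only if $[X,Y]\in\mathcal{D}_+$ for all $X,Y\in\mathcal{D}_+$ and $[X,Y]\in\mathcal{D}_-$ for all $X,Y\in\mathcal{D}_-$, i.e.\ if and only if both $\mathcal{D}_+$ and $\mathcal{D}_-$ are involutive. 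By the Frobenius theorem, involutivity is equivalent to integrability, which gives $(1)\Leftrightarrow(2)$.

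For $(2)\Leftrightarrow(3)$, the implication $(3)\Rightarrow(2)$ is immediate, since in the model chart $\mathcal{D}_+$ and $\mathcal{D}_-$ are spanned by coordinate vector fields and are therefore integrable. For $(2)\Rightarrow(3)$ I would rectify the two foliations at once. Applying Frobenius to $\mathcal{D}_+$ yields $n$ functionally independent first integrals $v^1,\dots,v^n$ with $\mathcal{D}_+=\bigcap_j\ker dv^j$, and applying it to $\mathcal{D}_-$ yields first integrals $u^1,\dots,u^n$ with $\mathcal{D}_-=\bigcap_i\ker du^i$. The heart of the argument is to check that the map $(u,v)=(u^1,\dots,u^n,v^1,\dots,v^n)$ is a local diffeomorphism near each point: using $TN=\mathcal{D}_+\oplus\mathcal{D}_-$ one shows that the restrictions of $du^1,\dots,du^n$ form a basis of $\mathcal{D}_+^*$ and those of $dv^1,\dots,dv^n$ a basis of $\mathcal{D}_-^*$, whence all $2n$ differentials are linearly independent and the inverse function theorem applies. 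In the resulting chart $\mathcal{D}_+=\operatorname{span}(\partial/\partial u^i)$ and $\mathcal{D}_-=\operatorname{span}(\partial/\partial v^j)$, so $K$ acts as $+\id$ on the first block and $-\id$ on the second, giving exactly the normal form $K=\begin{pmatrix}1_n&0\\0&-1_n\end{pmatrix}$.

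The main obstacle is precisely this simultaneous rectification: Frobenius separately straightens each distribution, but one must verify that the two sets of first integrals are \emph{jointly} independent, so that they serve as a single coordinate system in which both eigendistributions are coordinate. This independence is exactly what the transversality $TN=\mathcal{D}_+\oplus\mathcal{D}_-$ supplies, and it is the only place where the complementarity of the two eigendistributions, as opposed to the mere involutivity of each, is actually used.
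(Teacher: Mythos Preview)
Your argument is correct. The computation $N_K(X,Y)=4P_{\mp}[X,Y]$ on same-eigenspace pairs and $N_K(X,Y)=0$ on mixed pairs is accurate, giving $(1)\Leftrightarrow(2)$, and the simultaneous rectification via transversality of the two Frobenius foliations is the standard and valid way to obtain the normal form in $(3)$.

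There is nothing to compare against: the paper does not prove this statement but records it as a cited fact from \cite{CMMS04}. Your write-up supplies exactly the elementary proof one would expect in that reference, so it is consistent with (and more detailed than) what the paper offers.
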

\begin{definition}[\cite{CMMS04}]
    An almost para-complex structure is said to be \textit{integrable} if 
    the three equivalent conditions described above are satisfied.
\end{definition}
% \begin{remark}
%     \cite{And05}では条件(1)が成り立つことをintegrableと言い,
%     \cite{FRS19}や\cite{Svo20}では条件(2)が成り立つことをintegrable
%     と言っている.
% \end{remark}

\begin{definition}[\cite{YA73}]
    An almost para-quaternionic structure~$(I,J,K)$
    is said to be \textit{integrable} if 
    locally there exists a chart in which simultaneously
    $I=\begin{pmatrix}
        0& -1_n\\ 1_n&0
    \end{pmatrix},
    J=\begin{pmatrix}
        0& 1_n\\ 1_n&0
    \end{pmatrix},$ and
    $K=\begin{pmatrix}
        1_n&0\\0&-1_n
    \end{pmatrix}$.
\end{definition}
\begin{definition}
    Let $(I,J,K,h,k,\omega)$ be an almost Born structure.\\
    (1)~The structure~$(I,J,K,h,k,\omega)$ is said to be \textit{integrable}
    if the (1,1)-tensor fields $I,J$ and~$K$ are individually integrable
    and $d\omega=0$.\\
    (2)~The structure~$(I,J,K,h,k,\omega)$ is said to be \textit{strongly integrable}
    if the almost para-quaternionic structure~$(I,J,K)$ is integrable
    and $d\omega=0$.
\end{definition}
\begin{remark}
    In \cite{HKP25}, an almost Born structure is said to be integrable
    under the same condition as in this paper.
    Furthermore, the above condition (2) is strictly stronger than (1).
\end{remark}
In fact, the following holds.
\begin{note}\label{not:PQint}
    Consider the following two conditions on an 
    almost para-quaternionic structure~$(I,J,K)$. Then 
    the condition (2) is strictly stronger than (1).\\
    (1)~The (1,1)-tensor fields~$I,J$ and~$K$
    are individually integrable.\\
    (2)~The almost para-quaternionic structure is integrable.
\end{note}
\begin{proof}
    The implication $(2)\Rightarrow(1)$ follows from Facts \ref{fac:alcom} and
    \ref{fac:alpcom}. See \cite{And05} for examples where the condition 
    (1) holds but (2) does not.
\end{proof}

We present a standard example below.
\begin{example}\label{exm:StaBor}
    There exists a standard strongly integrable Born structure on~$\Cn{n}$.
    Let~$(z^1,\ldots, z^n)$ be the standard holomorphic chart on~$\Cn{n}$, and 
    $z^i=x^i+\iu y^i$. We construct a strongly integrable Born structure
    $(I,J,K,h,k,\omega)$ as follows:
    \begin{align*}
        I&:=\pd{}{y^i}\otimes dx^i -\pd{}{x^i}\otimes dy^i
        =\begin{pmatrix}
        0& -1_n\\ 1_n&0
        \end{pmatrix},\\
        J&:=\pd{}{y^i}\otimes dx^i+\pd{}{x^i}\otimes dy^i
        =\begin{pmatrix}
        0& 1_n\\ 1_n&0
        \end{pmatrix},\\
        K&:=\pd{}{x^i}\otimes dx^i -\pd{}{y^i}\otimes dy^i
        =\begin{pmatrix}
        1_n& 0\\ 0&-1_n
        \end{pmatrix},\\
        h&:=\delta_{ij}(dx^i\otimes dx^j+dy^i\otimes dy^j)
        =\begin{pmatrix}
        1_n& 0\\ 0&1_n
        \end{pmatrix},\\
        k&:=\delta_{ij}(dx^i\otimes dy^j+dy^j\otimes dx^i)
        =\begin{pmatrix}
            0& 1_n\\ 1_n&0
        \end{pmatrix},\\
        \omega&:=\delta_{ij}(dx^i\otimes dy^j - dy^j\otimes dx^i)
        % dx^i\wedge dy^j
        =\begin{pmatrix}
            0& 1_n\\ -1_n&0
        \end{pmatrix}.
    \end{align*}
    Here, the right-hand side denotes the matrix representation with respect
    to $(x^1,\ldots,x^n, y^1,\ldots,y^n)$.
\end{example}

\section{A Born structure on the tangent bundle of a Hessian manifold}

\subsection{Almost Born structures on tangent bundles}
In this section, we confirm that a manifold equipped with an affine connection
induces an almost para-quaternionic structure on its tangent bundle. Furthermore,
if the manifold is also equipped with a Riemannian metric, then it induces an almost
Born structure.\\
In the following, let~$M$ be an~$n$-dimensional manifold, 
$\nabla$ an affine connection on~$M$, 
and~$\pi:TM\rightarrow M$ the tangent bundle of~$M$.\\

While not explicitly stated in this form,
the following fact is proved in \cite{MS19}.
\begin{fact}\label{exm:stat}
    The (1,1)-tensor fields~$I,J$ and $K$ locally defined as follows,
    satisfy the compatibility conditions between charts.
    Thus, we obtain well-defined (1,1)-tensor fields on the total space of
    the tangent bundle~$TM$.
    Moreover, the triple~$(I,J,K)$ defines an almost para-quaternionic structure
    on~$TM$.\\

    Let~$(U;~\theta^1,\ldots,\theta^n)$ be an arbitrary chart on~$M$, and 
    $(TU;~x^1,\ldots,x^n,y^1,\ldots,y^n)$ a chart on~$TM$ defined as
    \begin{equation*}
        x^i:= \theta^i\circ\pi,\quad 
        y^i:= d\theta^i.
    \end{equation*} 
    Furthermore, we define a local frame~$(TU;~H_1,\ldots,H_n,V_1,\ldots,V_n)$
    as 
    \begin{equation*}
        H_i:=\pd{}{x^i}-(\Gamma^k_{ij}\circ \pi)y^j\pd{}{y^k},\quad 
        V_i:=\pd{}{y^i},
    \end{equation*}
    where $\Gamma_{ij}^k$ denotes the connection coefficients of~$\nabla$
    in the chart~$(U;~\theta^1,\ldots,\theta^n)$.
    Then, by using the dual frame of~$(TU;~H_1,\ldots,H_n,V_1,\ldots,V_n)$ i.e.
    \begin{equation*}
        {H^*}^i=dx^i,\quad
        {V^*}^i=(\Gamma^i_{jk}\circ \pi)y^kdx^j+dy^i,
    \end{equation*}
    we define the~$(1,1)$-tensor fields $I,J$ and~$K$ as
    \begin{align*}
        I&:=V_i\otimes H^{*i} -H_i\otimes V^{*i}
        =\begin{pmatrix}
            0& -1_n\\ 1_n&0
        \end{pmatrix},\\
        J&:=V_i\otimes H^{*i}+H_i\otimes V^{*i}
        =\begin{pmatrix}
            0& 1_n\\ 1_n&0
        \end{pmatrix},\\
        K&:=H_i\otimes H^{*i} -V_i\otimes V^{*i}
        =\begin{pmatrix}
            1_n& 0\\ 0&-1_n
        \end{pmatrix}.
    \end{align*}
    Here, the right-hand side denotes the matrix representation with respect
    to $(H_1,\ldots,H_n,V_1,\ldots,V_n)$.
\end{fact}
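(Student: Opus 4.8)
The plan is to establish two independent points: (i) the locally defined tensor fields $I,J,K$ do not depend on the chosen chart $(U;\theta^1,\ldots,\theta^n)$ on $M$, hence patch together to global $(1,1)$-tensor fields on $TM$; and (ii) the resulting triple satisfies $-I^2=J^2=K^2=\id$, $IJK=-\id$, with the $(\pm1)$-eigenspaces of $J$ and $K$ each of rank $n$.

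For (i) I would fix a second chart $(\tilde U;\tilde\theta^1,\ldots,\tilde\theta^n)$ with $U\cap\tilde U\neq\emptyset$ and write $A^a_i:=\left(\frac{\partial\tilde\theta^a}{\partial\theta^i}\right)\circ\pi$ for the pulled-back Jacobian, an invertible matrix of functions on $TU\cap T\tilde U$. Since $\tilde y^a=d\tilde\theta^a=A^a_i\,y^i$ one gets at once $V_i=A^a_i\tilde V_a$, i.e. the vertical frame transforms tensorially; the substantive claim is that the horizontal frame does too, $H_i=A^a_i\tilde H_a$. This is exactly the classical statement that the connection distribution of $\nabla$ is globally well defined, and it follows by inserting the transformation law of the connection coefficients of $\nabla$ into $H_i=\frac{\partial}{\partial x^i}-(\Gamma^k_{ij}\circ\pi)y^j\frac{\partial}{\partial y^k}$: the second-derivative terms produced by rewriting $\frac{\partial}{\partial x^i}$ in the new chart cancel precisely against those produced by the inhomogeneous term in the transformation of $\Gamma$. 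Dualizing, $H^{*i}=(A^{-1})^i_a\tilde H^{*a}$ and $V^{*i}=(A^{-1})^i_a\tilde V^{*a}$. Substituting into $V_i\otimes H^{*i}$, $H_i\otimes V^{*i}$ and $H_i\otimes H^{*i}-V_i\otimes V^{*i}$, the factors $A$ and $A^{-1}$ contract to identities, so each of $I,J,K$ is unchanged; hence they descend to global tensor fields on $TM$.

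For (ii) I would first check that $(H^{*i},V^{*i})$ is indeed the coframe dual to $(H_i,V_i)$: one has $dx^i(H_j)=\delta^i_j$, $dx^i(V_j)=0$, $V^{*i}(V_j)=\delta^i_j$, and $V^{*i}(H_j)=(\Gamma^i_{jk}\circ\pi)y^k+dy^i(H_j)=(\Gamma^i_{jk}\circ\pi)y^k-(\Gamma^i_{jk}\circ\pi)y^k=0$. Consequently $I(H_j)=V_j$, $I(V_j)=-H_j$, and similarly for $J$ and $K$, which justifies the block-matrix representations stated with respect to $(H_1,\ldots,H_n,V_1,\ldots,V_n)$. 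From these the relations $-I^2=J^2=K^2=\id$ and $IJK=-\id$ reduce to trivial $2\times2$ block computations (e.g.\ $IJ=-K$, whence $IJK=-K^2=-\id$), and the $(\pm1)$-eigenspaces of the block matrices for $J$ and $K$ are visibly of dimension $n$; thus $J,K$ are almost para-complex structures, $I$ is an almost complex structure, and $(I,J,K)$ is an almost para-quaternionic structure on $TM$.

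The only real work is the chart-change identity $H_i=A^a_i\tilde H_a$ in part (i); everything else is bookkeeping. If one prefers to avoid coordinates there, I would sketch the following: $\mathcal V:=\ker d\pi$ and the horizontal distribution $\mathcal H$ determined by $\nabla$ are globally defined subbundles of $T(TM)$ with $T(TM)=\mathcal H\oplus\mathcal V$; both $d\pi|_{\mathcal H}$ and the canonical vertical lift identify $\mathcal H$ and $\mathcal V$ with $\pi^*TM$, yielding a canonical bundle isomorphism $\phi\colon\mathcal H\to\mathcal V$ (in the local frame $\phi(H_i)=V_i$); and then $K=\id_{\mathcal H}\oplus(-\id_{\mathcal V})$, $I=\phi\oplus(-\phi^{-1})$, $J=\phi\oplus\phi^{-1}$ are manifestly well defined and reproduce the formulas above, after which (ii) is unchanged.
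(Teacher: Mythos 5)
Your proposal is correct: the duality check for $(H^{*i},V^{*i})$, the cancellation of the second-derivative terms against the inhomogeneous part of the transformation law of $\Gamma^k_{ij}$ (giving $H_i=A^a_i\tilde H_a$, $V_i=A^a_i\tilde V_a$), and the block-matrix verification of $-I^2=J^2=K^2=\id$, $IJ=-K$ are exactly the standard argument. The paper itself offers no proof of this statement --- it is recorded as a Fact and deferred to \cite{MS19} (the construction going back to \cite{Dom62}) --- so your write-up, including the invariant reformulation via the splitting $T(TM)=\mathcal{H}\oplus\mathcal{V}$ and the isomorphism $\phi\colon\mathcal{H}\to\mathcal{V}$, simply supplies the details the paper leaves to the citation, and it does so correctly.
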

\begin{fact}[\cite{MS19}]
    Let~$g$ be a Riemannian metric on~$M$.
    We can construct the well-defined (0,2)-tensor fields~$h,k$ and~$\omega$
    as follows, using the local frame $(TU;~H_1,\ldots,H_n,V_1,\ldots,V_n)$ defined in
    Fact~\ref{exm:stat};
    \begin{align*}
        h&:=(g_{ij}\circ\pi)(H^{*i}\otimes H^{*j}+V^{*i}\otimes V^{*j})
        =\begin{pmatrix}
            G& 0\\ 0&G
        \end{pmatrix},\\
        k&:=(g_{ij}\circ\pi)(H^{*i}\otimes V^{*j}+V^{*J}\otimes H^{*i})
        =\begin{pmatrix}
            0& G\\ G&0
        \end{pmatrix},\\
        \omega&:=(g_{ij}\circ\pi)(H^{*i}\otimes V^{*j}-V^{*J}\otimes H^{*i})
        % H^{*i}\wedge V^{*j}
        =\begin{pmatrix}
            0& G\\ -G&0
        \end{pmatrix},
    \end{align*}
    where~$G=(g_{ij}\circ\pi)_{i,j=1}^n$.
    Then, the tuple~$(I,J,K,h,k,\omega)$ defines an almost Born structure
    on~$TM$.
\end{fact}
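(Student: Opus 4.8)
The statement has two logically independent parts: first, that the local expressions for $h$, $k$ and $\omega$ glue to globally well-defined tensor fields on $TM$; and second, that the resulting tuple $(I,J,K,h,k,\omega)$ satisfies every axiom of an almost Born structure. The triple $(I,J,K)$ is already an almost para-quaternionic structure by Fact~\ref{exm:stat}, so the relations $-I^2=J^2=K^2=\id$ and $IJK=-\id$ may be assumed; what remains is to check the metric and form conditions on $h,k,\omega$ together with the three compatibility identities $I=h^{-1}\circ\omega$, $J=k^{-1}\circ h$, $K=\omega^{-1}\circ k$.

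For well-definedness I would reuse the transition behaviour of the frame $(H_i,V_i)$ and its dual coframe $(H^{*i},V^{*i})$ that already underlies Fact~\ref{exm:stat}. Writing $A^a_i=(\partial\theta'^a/\partial\theta^i)\circ\pi$ for the pulled-back Jacobian of a base change, the lifts satisfy $H_i=A^a_iH'_a$ and $V_i=A^a_iV'_a$, while the dual coframes satisfy $H'^{*a}=A^a_iH^{*i}$ and $V'^{*a}=A^a_iV^{*i}$; the connection-correction term in $V^{*i}$ is precisely what makes the latter a genuine dual coframe transforming tensorially. Since the base components transform as $g_{ij}=A^a_iA^b_jg'_{ab}$, substituting into, for instance, $(g'_{ab}\circ\pi)\,H'^{*a}\otimes H'^{*b}$ and collecting the Jacobian factors returns $(g_{ij}\circ\pi)\,H^{*i}\otimes H^{*j}$, so each of $h,k,\omega$ is chart-independent, exactly as $g_{ij}\,d\theta^i\otimes d\theta^j$ is on the base. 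Conceptually this is just the statement that $h$ is the orthogonal sum of the pulled-back metric $\pi^*g$ on the canonical vertical subbundle and on the $\nabla$-horizontal subbundle, while $k$ and $\omega$ pair those two pieces via $\pi^*g$, symmetrically and antisymmetrically.

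For the Born axioms I would argue in the frame $(H_1,\ldots,H_n,V_1,\ldots,V_n)$, where the matrices are the block forms displayed in the statement. Because $g$ is Riemannian, $G$ is symmetric and positive-definite, so $h=\begin{pmatrix}G&0\\0&G\end{pmatrix}$ is symmetric positive-definite and hence a Riemannian metric. The matrix of $k$ is symmetric, and on the vectors $\binom{u}{u}$ its quadratic form equals $2u^{\top}Gu>0$, while on $\binom{u}{-u}$ it equals $-2u^{\top}Gu<0$; these furnish complementary positive and negative subspaces of dimension $n$ each, so $k$ has signature $(n,n)$. The matrix of $\omega$ is antisymmetric, so $\omega$ is a genuine $2$-form, and $\det\begin{pmatrix}0&G\\-G&0\end{pmatrix}=\det(G)^2>0$ shows it is non-degenerate.

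Finally I would verify the three compatibility identities by matrix multiplication, and here lies the one genuine subtlety. Since the bundle isomorphisms are taken to be $X\mapsto h(X,\cdot)$, $X\mapsto k(X,\cdot)$ and $X\mapsto\omega(X,\cdot)$, the endomorphism $h^{-1}\circ\omega$ is represented by the product of $h^{-1}$ with the matrix of the map $X\mapsto\omega(X,\cdot)$, which for the antisymmetric $\omega$ is $-\omega$ rather than the bilinear-form matrix itself, whereas the symmetric $h$ and $k$ produce no such sign. Keeping track of this, one computes $h^{-1}\cdot(-\omega)=I$, $k^{-1}\cdot h=J$ and $(-\omega)^{-1}\cdot k=K$, matching the block matrices of Fact~\ref{exm:stat}. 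The main obstacle is exactly this bookkeeping: a naive reading of the displayed matrices without the transpose coming from the antisymmetry of $\omega$ instead yields $h^{-1}\omega=-I$, so the sign conventions in the maps $TM\to T^*M$ must be pinned down before the identities come out correctly.
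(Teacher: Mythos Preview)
The paper does not supply a proof of this statement: it is recorded as a Fact and attributed to \cite{MS19}, so there is no in-paper argument to compare against. Your verification is correct and is the natural direct check one would give. The well-definedness argument via the Jacobian transformation of the horizontal/vertical frame is exactly the mechanism behind Fact~\ref{exm:stat}, and your signature and non-degeneracy checks for $h$, $k$, $\omega$ are clean. Your flagged subtlety is genuine and worth keeping: with the convention $X\mapsto\omega(X,\cdot)$ the linear map has matrix $\Omega^{\top}=-\Omega$, and forgetting this transpose does indeed flip the sign of $I$ and $K$; once it is tracked, the block computations give $h^{-1}\circ\omega=I$, $k^{-1}\circ h=J$, $\omega^{-1}\circ k=K$ on the nose.
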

\begin{note}\label{not:HesInt}
    If~$\nabla$ is flat, and
    $(U;\theta^1,\ldots,\theta^n)$ is an affine chart for~$\nabla$, 
    then we have:
    \begin{align*}
        I&:=\pd{}{y^i}\otimes dx^i -\pd{}{x^i}\otimes dy^i
        =\begin{pmatrix}
            0& -1_n\\ 1_n&0
        \end{pmatrix},\\
        J&:=\pd{}{y^i}\otimes dx^i+\pd{}{x^i}\otimes dy^i
        =\begin{pmatrix}
            0& 1_n\\ 1_n&0
        \end{pmatrix},\\
        K&:=\pd{}{x^i}\otimes dx^i -\pd{}{y^i}\otimes dy^i
        =\begin{pmatrix}
            1_n& 0\\ 0&-1_n
        \end{pmatrix},\\
        h&:=(g_{ij}\circ\pi)(dx^i\otimes dx^j+dy^i\otimes dy^j)
        =\begin{pmatrix}
            G& 0\\ 0&G
        \end{pmatrix},\\
        k&:=(g_{ij}\circ\pi)(dx^i\otimes dy^j+dy^j\otimes dx^i)
        =\begin{pmatrix}
            0& G\\ G&0
        \end{pmatrix},\\
        \omega&:=(g_{ij}\circ\pi)(dx^i\otimes dy^j-dy^j\otimes dx^i)
        % dx^i\wedge dy^j
        =\begin{pmatrix}
            0& G\\ -G&0
        \end{pmatrix}.
    \end{align*}
\end{note}
% \begin{remark}
%     $M=(M,\nabla)$に対し$\nabla$がflatであるときに
%     $TM$にintegrable quaternionic structureが
%     定まるという事実について\cite{And05}に記載がある.\\
%     $M=(M,\nabla,g)$に対し$TM$にalmost Born structureが定まるという事実について,
%     調べる限り\cite{MS19}での記述が初出である.
% \end{remark}
% \begin{definition}
%     多様体$M$上のaffine接続$\nabla$とRiemann計量$g$の
%     組$M=(M,\nabla,g)$により定まるalmost Born manifold
%     を$\mathbb{M}=(TM,I,J,K,h,k,\omega)$と書く.
% \end{definition}
% \begin{remark}
%     記号$\mathbb{M}$は\cite{Svo20}に依った.
% \end{remark}
\begin{example}
    Regard~$\Rn{n}$ as a Hessian manifold with the standard Hessian structure
    defined in Example~\ref{exm:StaHes}.
    In this case, the induced Born structure on~$T\Rn{n}$ coincides with
    the standard strongly integrable Born structure on~$\Cn{n}$ defined in
    Example~\ref{exm:StaBor}, through the following natural identification.
    \begin{equation*}
        T\Rn{n} \rightarrow \Cn{n}~:~(x,v)\mapsto x+\iu v
    \end{equation*}
\end{example}

% \subsection{Functoriality}
% \begin{definition}
%     多様体とその上のaffine接続の組$(M,\nabla),(M',\nabla')$
%     に対し, 写像$f:M\rightarrow M'$が$\nabla$に関する任意の
%     平行移動を$\nabla'$に関する平行移動に移すとき, $\nabla,
%     \nabla'$を保つという.
% \end{definition}
% \begin{remark}
%     多様体とその上のaffine接続の組をobjectとし, 
%     connection-preserving mapをmorphismとすると圏を成す.
% \end{remark}
% % \begin{proposition}
% %     多様体とその上のaffine接続の組$(M,\nabla),(M',\nabla')$
% %     に対し, 写像$f:M\rightarrow M'$について次は同値.\\
% %     (1)$f$は$\nabla.\nabla'$を保つ.\\
% %     (2)任意の$M$のchart$(\theta^1,\ldots,\theta^n)$において
% %     \begin{equation*}
% %         \pd{f^i}{\theta^l}\Gamma^l_{jk}
% %         =\frac{\partial^2f^i}{\partial \theta^j\partial \theta^k}
% %         +{\Gamma'}^i_{lm}\pd{f^l}{\theta^j}\pd{f^m}{\theta^k}.
% %     \end{equation*}
% %     を満たす.
% % \end{proposition}

% \begin{fact}
%     多様体$M,M'$, affine接続$\nabla,\nabla'$, Riemannian metric~$g,g'$に対し
%     $M=(M,\nabla,g),M'=(M',\nabla',g')$と$M,M'$の定めるalmost Born manifold
%     $(\mathbb{M},I,J,K,h,k,\omega),(\mathbb{M}',I',J',K',h',k',\omega')$について
%     $\cinf$写像$f:M\rightarrow M'$及び$df:\mathbb{M}\rightarrow \mathbb{M}'$
%     を考える.\\
%     (1)~$f$が$\nabla,\nabla'$を保つ~$\Leftrightarrow$~$df$が$I,J,K$と$I',J',K'$
%     のいずれかを保つ.\\
%     (2)~(1)が成立するとき,\\
%     $f^*g'=g$~$\Leftrightarrow$~$(df)^*h'=h$または$(df)^*k'=k$または
%     $(df)^*\omega'=\omega.$
% \end{fact}
% \begin{remark}
%     $M\mapsto \mathbb{M}$は多様体およびaffine接続, Riemann計量
%     の組の圏から
%     almost Born多様体の圏へのfunctorである.
% \end{remark}

\subsection{Integrability of the Born structures on tangent bundles}
In this section, we prove the main theorem that relates
the flatness conditions of a manifold equipped with an affine connection 
and a Riemannian metric, to the integrability conditions of 
the induced almost para-quaternionic structure or almost 
Born structure on the tangent bundle.\\
In the following, let~$M$ be an~$n$-dimensional manifold, 
$\nabla$ an affine connection on~$M$, $g$ a
Riemannian metric on~$M$, and~$\pi: TM\rightarrow M$ 
the tangent bundle of~$M$. 
\begin{proposition}\label{fac:int}
    For~$(M, \nabla, g)$ and the induced almost Born manifold~
    $(TM, I,J,K,h,k,\omega)$, the following statements hold.\\
    (1)~The following conditions are equivalent.\\
    \quad(1-a) $\nabla$ is flat. \\
    \quad(1-b) $I$ is integrable.\\
    \quad(1-c) $J$ is integrable.\\
    (2)~The curvature of~$\nabla$ vanishes~$\Leftrightarrow$~$K$ is integrable.\\
    (3)~$\nabla^*$ is torsion-free 
    $\Leftrightarrow$ $d\omega=0.$
\end{proposition}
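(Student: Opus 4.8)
The plan is to carry everything over to the local frame $(H_1,\dots,H_n,V_1,\dots,V_n)$ of Fact~\ref{exm:stat}, in which both the Nijenhuis tensors and $d\omega$ are easy to read off. The first step is to record the Lie brackets of this frame: directly from the definitions of $H_i$ and $V_i$ one computes
\[
[V_i,V_j]=0,\qquad [H_i,V_j]=(\Gamma^k_{ij}\circ\pi)V_k,\qquad [H_i,H_j]=-\big((R^k_{lij})\circ\pi\big)\,y^l\,V_k,
\]
where $R$ is the curvature of $\nabla$; consequently $[H_i,V_j]+[V_i,H_j]=\big((T^k_{ij})\circ\pi\big)V_k$, with $T$ the torsion of $\nabla$. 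I would also note the frame actions $IH_i=V_i$, $IV_i=-H_i$, $JH_i=V_i$, $JV_i=H_i$, $KH_i=H_i$, $KV_i=-V_i$.

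For parts (1) and (2): if $\nabla$ is flat then in an affine chart $I,J,K$ become the constant standard (para-)complex matrices of Note~\ref{not:HesInt}, which are integrable by Facts~\ref{fac:alcom} and~\ref{fac:alpcom}. Conversely, substituting the brackets above into the Nijenhuis tensor gives, on frame vectors, $N_I(H_i,H_j)=\big((R^k_{lij})\circ\pi\big)y^lV_k+\big((T^k_{ij})\circ\pi\big)H_k$, and likewise $N_J(H_i,H_j)=-\big((R^k_{lij})\circ\pi\big)y^lV_k-\big((T^k_{ij})\circ\pi\big)H_k$; since $(H_k,V_k)$ is a frame and the fibre coordinates $y^l$ are free, vanishing of $N_I$ or of $N_J$ forces $R=0$ and $T=0$, i.e.\ $\nabla$ flat. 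Together with Fact~\ref{fac:alcom} for $I$ and Fact~\ref{fac:alpcom} for $J$ this yields the equivalences in~(1). For~(2) I would use no chart and argue through eigendistributions: the $(\pm1)$-eigenbundles of $K$ are the horizontal span of the $H_i$ and the vertical span of the $V_i$; the vertical one is always integrable, while by the bracket formula the horizontal one is integrable iff $\big((R^k_{lij})\circ\pi\big)y^l\equiv0$, i.e.\ iff the curvature of $\nabla$ vanishes, and Fact~\ref{fac:alpcom} then concludes. (Equivalently, $N_K$ reduces to its $(H_i,H_j)$-component, which equals $-4\big((R^k_{lij})\circ\pi\big)y^lV_k$.) This argument never sees the torsion, which is exactly why~(2) only constrains the curvature.

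For part~(3): using $H^{*i}=dx^i$ and $V^{*i}=(\Gamma^i_{jk}\circ\pi)y^kdx^j+dy^i$, one expands
\[
\omega=\big((g_{im}\Gamma^m_{kl})\circ\pi\big)\,y^l\,dx^i\wedge dx^k+(g_{ij}\circ\pi)\,dx^i\wedge dy^j,
\]
differentiates, and sorts $d\omega$ into a $3$-form $\mathcal A$ involving only the $dx$'s plus a remainder $\mathcal B=\big[(g_{im}\Gamma^m_{kj}-\partial_k g_{ij})\circ\pi\big]\,dx^i\wedge dx^k\wedge dy^j$. Vanishing of $\mathcal B$ amounts to the $(i,k)$-symmetry of $g_{im}\Gamma^m_{kj}-\partial_k g_{ij}$, which by the defining identity $Xg(Y,Z)=g(\nabla_XY,Z)+g(Y,\nabla^*_XZ)$ is precisely the torsion-freeness of $\nabla^*$. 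Moreover, once $\mathcal B=0$ the $(i,k)$-skew part of $g_{im}\Gamma^m_{kl}$ equals $\frac{1}{2}(\partial_k g_{il}-\partial_i g_{kl})$, so the coefficient of $\mathcal A$ becomes an alternation of second partial derivatives of $g$ and hence vanishes identically; therefore $d\omega=0$ iff $\nabla^*$ is torsion-free.

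The one genuinely delicate point is the final claim: that the pure-$dx$ piece $\mathcal A$ of $d\omega$ vanishes \emph{automatically} as soon as $\mathcal B$ does, so that $d\omega=0$ hides no curvature condition. This rests on recognising the metric-lowered skew part of the connection as a Hessian-type second derivative of $g$ after imposing $\mathcal B=0$, and then invoking equality of mixed partials. The bracket identity for $[H_i,H_j]$ is the only other spot where the signs and index positions have to be pinned down carefully.
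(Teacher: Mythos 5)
Your proposal is correct, and it overlaps with the paper only in its computational core: the paper also works in the frame $(H_i,V_i)$, records the same bracket identities, and computes $N_J$ on frame pairs to get (1-a)$\Leftrightarrow$(1-c). Everything else the paper handles by citation --- (1-a)$\Leftrightarrow$(1-b) is quoted from Dombrowski, statement (2) from \cite[Lemma 3]{FRS17}, and statement (3) from \cite[Theorem 1.1]{Sat07} --- whereas you prove all three directly: an $N_I$ frame computation plus the affine-chart argument for (1), an eigendistribution argument for $K$ (vertical distribution always integrable, horizontal integrable iff $R=0$, then Fact~\ref{fac:alpcom}) for (2), and an explicit expansion of $d\omega$ for (3), including the genuinely nontrivial point that the pure-$dx$ part $\mathcal A$ vanishes automatically once the mixed part $\mathcal B$ does, because imposing $\mathcal B=0$ turns the $(i,k)$-skew part of $g_{im}\Gamma^m_{kl}$ into $\tfrac12(\partial_k g_{il}-\partial_i g_{kl})$, whose further alternated derivative dies by equality of mixed partials; this is exactly what guarantees $d\omega=0$ encodes only the torsion of $\nabla^*$ and no curvature condition. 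Your route buys a self-contained proof (and a transparent explanation of why (2) sees only curvature and (3) only dual torsion), at the cost of more index bookkeeping; the paper buys brevity by outsourcing to the literature. One small remark: your sign $[H_i,V_j]=+(\Gamma^k_{ij}\circ\pi)V_k$ is what the stated definition of $H_i$ actually gives (the paper prints a minus sign there), but since only the antisymmetrization $T^k_{ij}$ and the $y$-linear curvature term enter the vanishing conditions, neither sign convention affects any of the equivalences.
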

\begin{proof}
    The implication (1-a)$\Leftrightarrow$(1-b) is proved 
    in \cite[\textsection 5. Corollary]{Dom62}.\\
    The statement (2) is proved in \cite[Lemma 3]{FRS17}.\\
    The statement (3) is proved in \cite[Theorem 1.1]{Sat07}.\\
    We now prove the implication (1-a)$\Leftrightarrow$(1-c).\\
    For the local frame~$(TU;~H_1,\ldots,H_n,V_1,\ldots,V_n)$
    defined in Fact~\ref{exm:stat}, the identities:
    \begin{equation*}
        [H_i,H_j]=-(R_{ijk}^l\circ \pi)y^kV_l,\quad
        [V_i,V_j]=0,\quad
        [H_i,V_j]=-\Gamma_{ij}^kV_k
    \end{equation*}
    hold. Then we have the following identities, from which the claim follows:
    \begin{align*}
        N_J(H_i,H_j)&=T_{ij}^kH_k-R_{ijk}^ly^kV_l,\\
        N_J(V_i,V_j)&=T_{ij}^kH_k-R_{ijk}^ly^kV_l,\\
        N_J(H_i,V_j)&=R_{ijk}^ly^kH_l-T_{ij}^kV_k.
    \end{align*}
\end{proof}
We note that the above proof of the implication 
(1-a)$\Leftrightarrow$(1-c) is the same as that in \cite{Dom62}
although the notation is different.
\ThmHesBor

\begin{proof}
    (1)~The implication~(1-a)$\Rightarrow$(1-c) follows
    from the expression in Note~\ref{not:HesInt}, 
    (1-c)$\Rightarrow$(1-b) follows from
    Note \ref{not:PQint}, and
    (1-b)$\Rightarrow$(1-a) follows from Proposition~\ref{fac:int}.\\
    (2)~The implication~(2-a)$\Rightarrow$(2-c) follows from 
    (1-a)$\Rightarrow$(1-c), Fact~\ref{not:HesTor}, and~Proposition~\ref{fac:int}.
    The implication (2-c)$\Rightarrow$(2-b) follows from the definition
    of the integrability conditions for Born structures.
    Furthermore, the implication (2-b)$\Rightarrow$(2-a)
    follows from Fact~\ref{not:HesTor} and 
    Proposition~\ref{fac:int}.
\end{proof}
% \begin{remark}
%     For general almost para-quaternionic manifolds~$N=(N,I,J,K)$,
%     the equivalence (1-b)$\Leftrightarrow$(1-c) does not hold, and
%     for general almost Born manifolds~$N=(N,I,J,K,h,k,\omega)$,
%     the equivalence (2-b)$\Leftrightarrow$(2-c) does not hold.
% \end{remark}

\end{document}